\newcommand{\IP}[2]{\left< #1 , #2 \right>}
\newcommand{\aIP}[2]{\left< #1 , #2 \right>}
\renewcommand{\H}{\vec{H}}
\newcommand{\R}{\ensuremath{\mathbb{R}}}
\newcommand{\SL}{\ensuremath{\mathcal{L{}}}}
\newtheorem{thm}{Theorem}{\bf}{\it}
{\bf}{\it}
{\bf}{\it}
\newtheorem{lem}[thm]{Lemma}{\bf}{\it}
\newtheorem*{conj}{Chen's Conjecture}{\bf}{\it}
\theoremstyle{definition}
\newtheorem*{rmk}{Remark}{\bf}{\rm}
\begin{document}

\title[Superbiharmonic submanifolds]{Chen's conjecture and $\varepsilon$-superbiharmonic
submanifolds of Riemannian manifolds}
\author{Glen Wheeler${}^{*}$}
\address{Otto-von-Guericke Universit\"at Magdeburg, Fakult\"at f\"ur Mathematik, Institut f\"ur
Analysis und Numerik, Universit\"atsplatz 2, 39106 Magdeburg, Germany}
\thanks{${}^*$: Financial support from the Alexander-von-Humboldt Stiftung is gratefully
acknowledged}

\begin{abstract}
B.-Y. Chen famously conjectured that every submanifold of Euclidean space with harmonic mean
curvature vector is minimal.
In this note we establish a much more general statement for a large class of submanifolds satisfying a
growth condition at infinity.
We discuss in particular two popular competing natural interpretations of the conjecture when the
Euclidean background space is replaced by an arbitrary Riemannian manifold.
Introducing the notion of $\varepsilon$-\emph{superbiharmonic submanifolds}, which contains each of the previous
notions as special cases, we 
prove that $\varepsilon$-superbiharmonic submanifolds of a complete Riemannian manifold which satisfy 
a growth condition at infinity are minimal.
\keywords{local differential geometry\and global differential geometry\and higher order\and
geometric analysis \and higher order partial differential equations}
\end{abstract}

\subjclass[2000]{53C43 (Primary) 53C42, 35J30 (Secondary)}
\maketitle

\section{Introduction}

Suppose $M^m$ is a submanifold of a Riemannian manifold $(N^{m+n},\IP{\cdot}{\cdot})$ immersed via
$f:M^m\rightarrow N^{n+m}$ and equipped with the Riemannian metric induced via $f$.
Throughout we assume that all manifolds and mappings are proper and locally smooth.
Letting $\Delta$ denote the rough Laplacian, our goal is to determine sufficient conditions for the
validity of:

\begin{conj}[B.-Y. Chen \cite{Cconj}]
Suppose $f:M^m\rightarrow \R^{n+m}$ satisfies
\begin{equation}
\label{EQconj}
\Delta \H \equiv 0.
\end{equation}
Then $\H \equiv 0$.
\label{CJ}
\end{conj}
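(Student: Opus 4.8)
The plan is to turn the geometric PDE into a statement about harmonic functions via Beltrami's formula. For an isometric immersion $f\colon M^m\to\R^{n+m}$ the rough Laplacian acts on the position map by $\Delta f = m\H$ (the constant $m$ and any sign convention are immaterial below). Applying $\Delta$ a second time and using that the ambient connection on $\R^{n+m}$ is flat, the biharmonicity hypothesis \eqref{EQconj} says precisely that $\H$, regarded as an $\R^{n+m}$-valued map $\H=(\H^1,\dots,\H^{n+m})$ on $M$, is componentwise harmonic: $\Delta\H^a\equiv0$ for each $a$. This is the observation I would build everything on.

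From this I would derive the fundamental subharmonicity estimate. Using the product rule for $\Delta$ together with $\Delta\H^a\equiv0$,
\begin{equation}\label{EQsub}
\tfrac12\,\Delta|\H|^2 = \sum_{a}\H^a\,\Delta\H^a + \sum_{a}|\nabla\H^a|^2 = |\nabla\H|^2 \ge 0,
\end{equation}
where $\nabla\H$ is the full ($\R^{n+m}$-valued) derivative, carrying both the tangential and the normal parts. Thus $|\H|^2$ is a nonnegative subharmonic function on $M$, and the whole problem is reduced to showing that this subharmonic function is in fact constant.

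Granting that reduction, minimality follows cleanly. If $|\H|^2$ is constant then \eqref{EQsub} forces $\nabla\H\equiv0$, so $\H$ equals a fixed vector $\H_0\in\R^{n+m}$. Because the mean curvature vector is everywhere normal to $M$, the tangential gradient of $\IP{f}{\H_0}$ is the tangential projection $\H_0^\top\equiv0$; hence $\IP{f}{\H_0}$ is constant and $\Delta\IP{f}{\H_0}=0$. On the other hand $\Delta\IP{f}{\H_0}=\IP{\Delta f}{\H_0}=m\,|\H_0|^2$. Comparing gives $\H_0=0$, so $\H\equiv0$ and $f$ is minimal. Note this last step needs no integration and is valid whether or not $M$ is compact. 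In particular, on a closed $M$ the argument is already complete, since a subharmonic function on a compact manifold is constant by the maximum principle (or by $\int_M\Delta|\H|^2\,d\mu=0$).

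The hard part is exactly the passage from subharmonicity of $|\H|^2$ to its constancy when $M$ is complete and noncompact, and I expect this to be the sole genuine obstacle. Subharmonic functions on complete noncompact manifolds are in general very far from constant, so without additional information the scheme stalls precisely here; this is the same obstruction that keeps Chen's conjecture open. To close it one must control the behaviour of $|\H|^2$ at infinity — for instance through an $L^p$ or volume-weighted growth bound — and then invoke a Liouville/Caccioppoli-type theorem (in the spirit of Yau, Karp, or Sturm) to force $\nabla\H\equiv0$. Absent such control I see no way to complete the argument for an arbitrary complete $M$, which is exactly why the conjecture as stated is beyond this method and why one is led instead to the growth-restricted, $\varepsilon$-superbiharmonic formulation that the paper actually establishes.
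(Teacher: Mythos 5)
Your reading of the situation is exactly right, and everything you actually prove is correct: the statement is an open conjecture, and the paper does not prove it either --- it offers only the two-line argument for compact $M$ sketched in the introduction, plus Theorem \ref{TMgeneral}, which adds precisely the kind of growth hypothesis \eqref{EQgrowthcond} you anticipate. Where proofs exist, your route runs parallel to the paper's with slightly different packaging. For compact $M$ the paper integrates by parts twice: $0 = \int_M\IP{\H}{\Delta\H}d\mu = -\int_M|\nabla\H|^2d\mu$ gives $\nabla\H\equiv0$, and then $0=\int_M\Delta^2|f|^2d\mu = 2\int_M|\H|^2d\mu$ finishes; you instead note componentwise harmonicity of $\H$, deduce $\tfrac12\Delta|\H|^2=|\nabla\H|^2\ge0$, and invoke the maximum principle --- the same ingredients (Beltrami's formula plus the position vector) in pointwise rather than integral form. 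Your passage from $\nabla\H\equiv0$ to $\H\equiv0$ via $0=\Delta\IP{f}{\H_0}=m|\H_0|^2$ is a nice touch: it needs no integration and, as you observe, works on noncompact $M$ as well; its one limitation is that it is tied to the flat ambient space through the position vector, whereas the paper's Lemma \ref{LMh0} does the same step intrinsically --- from $0 = \IP{\nabla_{\tau_i}\H}{\tau_j} = H_\alpha A^\alpha(\tau_i,\tau_j)$, tracing over $g$ gives $|\H|^2=0$ --- and therefore survives in an arbitrary ambient Riemannian manifold, which the paper needs for its $\varepsilon$-superbiharmonic generalisation. Finally, the noncompact mechanism you gesture at (growth control plus a Caccioppoli-type estimate) is exactly the paper's Lemma \ref{LMhconst}: a cutoff supported on geodesic balls of $N$ yields $\int_{f^{-1}(B_\rho)}|\nabla\H|^2d\mu \le \frac{c}{\rho^2}\int_{f^{-1}(B_{2\rho})}|\H|^2d\mu$, and the growth condition kills the right-hand side as $\rho\to\infty$. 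So your stall point is genuine and unavoidable --- it is the conjecture itself --- and your diagnosis of the missing hypothesis and of the tool that exploits it coincides with what the paper actually does.
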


One fundamental difficulty is that the conjecture is a local question.  It appears at this time that
understanding the local structure of submanifolds satisfying \eqref{EQconj} to the point of
determining their minimality is a very difficult task.
Indeed, the conjecture continues to remain open with very little progress for hypersurfaces (where
the normal bundle is trivial) with intrinsic dimension greater than $4$ of Euclidean space.
Nevertheless, the study of the conjecture is quite active, with many partial results.
In \cite{chenishikawa98,Jiangcompactproof} Chen's conjecture is established for $m=2$ and $n=1$,
i.e., surfaces lying in $\R^3$.  Dimitri\'c \cite{dimitric92} proved the conjecture for $m=1$ and
$n$ arbitrary (curves in $\R^n$).  He also proved that if $f$ is additionally pseudo-umbilic then
the conjecture holds for $m\ne4$, and $n$ arbitrary, as well as that if $f$ possesses at most two
distinct principal curvatures and $m$ arbitrary, $n=1$ (hypersurfaces in $\R^{n+1}$). 
The case $m=3$, $n=1$ is proven in \cite{hasanisvlachos95}, and for surfaces lying in $\R^3$ it was
established by B.-Y. Chen in an unpublished work, as reported in \cite{Cconj}.

If $N^{n+m} = \R^{n+m}$ then submanifolds satisfying \eqref{EQconj} are also critical points of the
bi-energy $E(f) = \int_M |\tau(f)|^2d\mu$, where $\tau(f)$ is the tension field of $f$.
If the ambient space is curved however, then critical points of $E(f)$ satisfy instead
\begin{equation}
\label{EQbiharmmaps}
\Delta \H = R^N(e_i,\H)e_i\,,
\end{equation}
where $R^N$ is the curvature tensor of $N^{n+m}$ and $\{e_i\}$ is a local orthonormal frame of $M$.
Equation \eqref{EQbiharmmaps} is of course the condition for $f:M^m\rightarrow N^{n+m}$ to be an
intrinsic biharmonic map.
If $N$ has positive sectional curvature, then there are many well-known examples of non-minimal
solutions of \eqref{EQbiharmmaps}.
The so-called \emph{generalised Chen's conjecture} \cite{CMO01} states that if $N$ has everywhere
non-positive sectional curvature, then all solutions of \eqref{EQbiharmmaps} are minimal.
Although this turned out to be false \cite{OT10}, it remains interesting to determine sufficient
conditions which guarantee that solutions of \eqref{EQbiharmmaps} are minimal.
A survey of results in this direction can be found in \cite{survey}.

Now despite Chen's Conjecture being stated for submanifolds of Euclidean space, clearly equation
\eqref{EQconj} continues to make sense when the ambient space $N^{n+m}$ has some curvature.
It is thus not particularly clear which is the `correct' generalisation of the conjecture for
submanifolds of a Riemannian manifold $N^{n+m}$.
Given that there are many non-harmonic biharmonic maps (satisfying \eqref{EQbiharmmaps}), we find it
appropriate (as in \cite{biharmoniclifts2003} for example) to also investigate the minimality of
solutions of the original equation \eqref{EQconj} given by Chen, including considering the case
where the ambient space $N^{m+n}$ is curved.

One concept which generalises both \eqref{EQbiharmmaps} in the setting of a negatively curved
background space and the biharmonicity condition \eqref{EQconj} is
\begin{equation}
\label{EQsuperbiharmonic}
\IP{\Delta\H}{\H} \ge (\varepsilon-1)|\nabla H|^2\,,
\end{equation}
for $\varepsilon\in[0,1]$.
For $\varepsilon=0$ this implies $\Delta |\H|^2 \ge 0$, and so (somewhat respecting standard
convention, see Duffin \cite{D49} for example), we term solutions $f$ of \eqref{EQsuperbiharmonic}
\emph{$\varepsilon$-superbiharmonic submanifolds} or say that the mean curvature vector field is
\emph{$\varepsilon$-subharmonic}.

If $f$ is compact, then Chen's Conjecture is simple to prove.
This is a consequence of the argument used by Jiang \cite{Jiangcompactproof}.
An alternative method to obtain this result in flat space is as follows.
Compute
\[
0 = \int_M \IP{\H}{\Delta\H}d\mu
  = - \int_M |\nabla\H|^2d\mu\,
\]
so that $\nabla\H\equiv0$.
Now this implies
\[
0 = \int_M \Delta^2|f|^2d\mu
  = 2\int_M \Delta \Big( \IP{\H}{f} + m \Big)\,d\mu
  = 2\int_M |\H|^2d\mu
\]
and so we conclude $\H\equiv0$ and $f$ is minimal.
This simple argument is readily generalised to the setting of non-compact
solutions of \eqref{EQsuperbiharmonic}.
\footnote{Note added in proof: A theorem similar to Theorem \ref{TMgeneral} has been recently
obtained independently by Nakauchi, Urakawa and Gudmundsson \cite{NU}.}

\begin{thm}
Let $N^{n+m}$ be a complete Riemannian manifold.
Suppose $f:M^m\rightarrow N^{n+m}$ is $\varepsilon$-superbiharmonic in the sense that it satisfies
\eqref{EQsuperbiharmonic} for $\varepsilon\ge0$.
Assume in addition that $f$ satisfies the growth condition
\begin{equation}
\liminf_{\rho\rightarrow\infty}
  \frac{1}{\rho^2}\int_{f^{-1}(B_\rho)} |\H|^2d\mu
 = 0\,.
\label{EQgrowthcond}
\end{equation}
Then $\H \equiv 0$ and $f$ is minimal.
\label{TMgeneral}
\end{thm}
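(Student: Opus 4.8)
The plan is to combine the Weitzenb\"ock identity with the growth hypothesis through a cutoff argument adapted to the extrinsic distance. First I would record the Bochner--Weitzenb\"ock identity for the rough Laplacian,
\[
\tfrac12\Delta|\H|^2 = \IP{\Delta\H}{\H} + |\nabla\H|^2,
\]
which holds for any metric connection. Substituting the defining inequality \eqref{EQsuperbiharmonic} immediately yields
\[
\tfrac12\Delta|\H|^2 \ge \varepsilon|\nabla\H|^2 \ge 0,
\]
so that $|\H|^2$ is a nonnegative subharmonic function on $M$. This reduces matters to a Liouville-type statement for subharmonic functions under the integral control \eqref{EQgrowthcond}.

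Next I would exploit completeness of $N$ to globalise. Fixing $p\in N$ and writing $r=d_N(p,f(\cdot))$, the function $r$ is $1$-Lipschitz on $N$, and since the tangential projection does not increase length we have the key pointwise bound $|\nabla r|\le 1$ on $M$. I would then take a piecewise-linear cutoff $\gamma_\rho$ of $r$, equal to $1$ on $f^{-1}(B_\rho)$, vanishing outside $f^{-1}(B_{2\rho})$, with $|\gamma_\rho'|\le C/\rho$; the Lipschitz bound gives $|\nabla\gamma_\rho|\le C/\rho$ supported in $f^{-1}(B_{2\rho}\setminus B_\rho)$. Testing the subharmonicity inequality against $\gamma_\rho^2$ and integrating by parts (there is no boundary term, as $\gamma_\rho$ is compactly supported on the properly immersed $M$) gives
\[
\varepsilon\int_M \gamma_\rho^2|\nabla\H|^2\,d\mu \le -\int_M \gamma_\rho\IP{\nabla\gamma_\rho}{\nabla|\H|^2}\,d\mu.
\]
Using $|\nabla|\H|^2|\le 2|\H|\,|\nabla\H|$ and Young's inequality to absorb the $\gamma_\rho^2|\nabla\H|^2$ term produces, for $\varepsilon>0$,
\[
\int_M \gamma_\rho^2|\nabla\H|^2\,d\mu \le \frac{C}{\varepsilon^2\rho^2}\int_{f^{-1}(B_{2\rho})}|\H|^2\,d\mu.
\]
Letting $\rho\to\infty$ along the sequence realising the $\liminf$ in \eqref{EQgrowthcond}, the right-hand side tends to $0$ while $\gamma_\rho^2\nearrow1$; monotone convergence then forces $\int_M|\nabla\H|^2\,d\mu=0$, whence $\nabla\H\equiv0$ and $|\H|$ is constant.

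Finally I would promote the constancy of $|\H|$ to the vanishing $\H\equiv0$. Since $\nabla\H\equiv0$ gives $\Delta\H\equiv0$, the immersion now satisfies \eqref{EQconj} exactly, and I would imitate the compact computation through an exhaustion built from $\phi=\tfrac12 r^2$, using $\Delta(\phi\circ f)=\operatorname{tr}_M\operatorname{Hess}^N\phi + m\IP{\nabla^N\phi}{\H}$ together with $\nabla\H\equiv0$ to isolate a positive multiple of $|\H|^2$ after cutting off as above. The main obstacle is precisely this last step: unlike in $\R^{n+m}$, where $\operatorname{Hess}\tfrac12|f|^2$ is the identity and the position vector closes the argument algebraically, in a general complete $N$ the Hessian of $r^2$ is controlled only through the ambient curvature and comparison geometry, so \eqref{EQgrowthcond} must be invoked a second time to discard the error terms and to exclude a nonzero constant value of $|\H|$. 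Handling the borderline case $\varepsilon=0$, where the absorption degenerates and only the bare subharmonicity of $|\H|^2$ survives, is the most delicate point, and is where I expect completeness and the sharp $\rho^{-2}$ scaling in \eqref{EQgrowthcond} to be used most essentially.
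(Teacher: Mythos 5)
Your first two steps are exactly the paper's argument for its key lemma: the identity $\tfrac12\Delta|\H|^2=\IP{\Delta\H}{\H}+|\nabla\H|^2$, the cutoff pulled back from geodesic balls in $N$, Young's inequality producing the $C\varepsilon^{-2}\rho^{-2}$ Caccioppoli estimate, and the growth condition \eqref{EQgrowthcond} along the sequence realising the liminf, yielding $\nabla\H\equiv0$ for $\varepsilon>0$. Up to that point your proposal is correct and coincides with the paper's Lemma \ref{LMhconst}.

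The genuine gap is your final step, and the idea you are missing makes your entire last paragraph unnecessary. In this paper $\nabla$ acting on $\H$ is the full pullback connection along $f$, not merely the connection in the normal bundle; the paper remarks explicitly that $\nabla\H\equiv0$ is stronger than $\H$ being parallel in the normal bundle. Consequently $\nabla\H\equiv0$ carries tangential information: differentiating the orthogonality relation $\IP{\H}{\partial_jf}=0$ gives the Weingarten identity
\[
\IP{\nabla_{\tau_i}\H}{\tau_j} = -\IP{\H}{A_{ij}} = -H_\alpha A^\alpha(\tau_i,\tau_j)\,,
\]
so $\nabla\H\equiv0$ forces $H_\alpha A^\alpha(\tau_i,\tau_j)=0$ for all $i,j$, and tracing with $g^{ij}$ yields $|\H|^2=H_\alpha H^\alpha=0$ pointwise. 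This is the paper's Lemma \ref{LMh0}: a purely algebraic, pointwise conclusion, requiring no second use of \eqref{EQgrowthcond}, no exhaustion, and no comparison geometry. By contrast, the route you propose through the Hessian of $\tfrac12 r^2$ genuinely cannot be closed: in an arbitrary complete $N$ that Hessian is controlled only under curvature hypotheses which the theorem does not assume, exactly as you suspect. So as written your proof stops short of the conclusion, and Lemma \ref{LMh0} is the missing ingredient.

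One further point: your unease about $\varepsilon=0$ is well founded, and you should not try to rescue that case. The absorption step degenerates (the constant is $\varepsilon^{-2}$), and the paper's Lemma \ref{LMhconst} likewise assumes $\varepsilon>0$ even though the theorem is stated with $\varepsilon\ge0$. Indeed at $\varepsilon=0$ the conclusion fails outright: any compact submanifold with $|\H|$ constant, such as a round sphere in Euclidean space, satisfies $\IP{\Delta\H}{\H}=-|\nabla\H|^2$ and, being compact, satisfies \eqref{EQgrowthcond}, yet is not minimal.
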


\begin{rmk}
Clearly $\R^{n+m}$ is complete and so the theorem applies in the setting of Chen's Conjecture.
This resolves the conjecture up to the growth condition \eqref{EQgrowthcond}.
\end{rmk}

\begin{rmk}
One must be careful in interpreting the growth condition in the case where $N$ is closed.
In this setting, the inverse image of the geodesic balls $f^{-1}(B_\rho)$ (geodesic in $N^{n+m}$)
will cover $M^m$ infinitely many times.  The growth condition \eqref{EQgrowthcond} thus becomes more
restrictive than it first appears (although not completely vacuous).
\end{rmk}


\section{Setting}

In this section we briefly set our notation and conventions.  We have as our main object of study an
immersion $f:M^m\rightarrow (N^{n+m},\IP{\cdot}{\cdot})$ and consider the $m$-dimensional Riemannian
submanifold $(M^m,g)$ with the metric $g$ induced by $f$, that is, given a local frame
$\tau_1,\ldots,\tau_m$ of the tangent bundle define the induced metric and associated induced volume
form by
\[
g_{ij} = \aIP{\partial_if}{\partial_jf}\, \qquad
d\mu = \sqrt{\text{det }g_{ij}}\, d\SL^{m}\,,
\]
where $d\SL^m$ denotes $m$-dimensional Lebesgue (or Hausdorff) measure.
Associated with $M^m$ is its (vector valued) second fundamental form, given by
\[
A_{ij} = \big(D_i\partial_{j}f\big)^\perp,
\]
where $D$ is the covariant derivative with respect to the Levi-Civita connection on $N$, and
$(\cdot)^\perp$ is the projection onto the normal bundle $(TM)^\perp$, which is given by
\[
X^\perp = X - X^\top = X - g^{ij}\IP{X}{\partial_if}\partial_jf 
\]
for a vector field $X:M^m \rightarrow N^{m+n}$.
The trace of $A$ under the metric $g$ is the mean curvature vector,
\[
\H = g^{ij}A_{ij}.
\]
The Levi-Civita connection $\nabla$ for $g$ is the unique metric connection on $M$ with coefficients
given in local coordinates by
\[
\nabla_{\tau_i}\tau_j = \Gamma_{ij}^k\tau_k,\quad\text{where}\quad \Gamma_{ij}^k
 = \frac{1}{2}g^{kl}(\partial_ig_{jl} + \partial_jg_{il} - \partial_lg_{ij}).
\]
Tracing $\nabla\nabla = \nabla_{(2)}$ over $g$ gives the metric or rough laplacian $\Delta$, and
$(p,q)$-tensor fields $T$ over $M^m$ are termed \emph{harmonic} if $\Delta T \equiv 0$ and
\emph{biharmonic} if
\[
\Delta^2 T
 := \Delta\Delta T
 \equiv 0
\]
on $M^m$.
In particular, the immersion $f:M^m \rightarrow N^{m+n}$ is itself called \emph{biharmonic} if
$\Delta^2f \equiv 0$ (cf. \eqref{EQconj}).
In this case we term $M^m$ (and $f(M^m)$) a \emph{biharmonic submanifold} of $N^{m+n}$.


\section{A local-global integral estimate}

Throughout the remainder of this note we shall use the abbreviations $M := M^m$ and $N := N^{m+n}$.
The ambient space $N$ will always be assumed to be complete.
We first prove the following lemma.

\begin{lem}
\label{LMhconst}
Suppose that $f:M\rightarrow N$ is an $\varepsilon$-superbiharmonic submanifold for some
$\varepsilon>0$ and
\begin{equation}
\liminf_{\rho\rightarrow\infty}
     \frac{1}{\rho^2}
     \int_{f^{-1}(B_\rho)} |\H|^2d\mu = 0\,.
\label{EQsmallgrowth}
\end{equation}
Then $\nabla\H\equiv0$.
\end{lem}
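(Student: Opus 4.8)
The plan is to convert the pointwise $\varepsilon$-superbiharmonicity into a differential inequality for $|\H|^2$ and then run a compactly supported cutoff argument, using $\varepsilon>0$ to absorb the unavoidable gradient term. First I would record the Kato-type identity for the metric-compatible connection defining $\Delta$,
\[
\tfrac12\Delta|\H|^2 = \IP{\Delta\H}{\H} + |\nabla\H|^2,
\]
and insert \eqref{EQsuperbiharmonic} rewritten as $\IP{\Delta\H}{\H}+|\nabla\H|^2\ge\varepsilon|\nabla\H|^2$, obtaining the clean inequality $\tfrac12\Delta|\H|^2 \ge \varepsilon|\nabla\H|^2$ on all of $M$.

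Next I would build the test function from the ambient distance. Fixing $p\in N$ and writing $r=\operatorname{dist}_N(p,\cdot)$, I set $\gamma_\rho = \eta\big((r\circ f)/\rho\big)$ for a fixed $\eta$ with $\eta\equiv1$ on $[0,1]$, $\eta\equiv0$ on $[2,\infty)$ and $|\eta'|\le C$. Since $f$ is an isometric immersion, $r\circ f$ is $1$-Lipschitz on $M$, so $|\nabla\gamma_\rho|\le C/\rho$ with $\nabla\gamma_\rho$ supported in $f^{-1}(B_{2\rho}\setminus B_\rho)$; completeness of $N$ (Hopf--Rinow) together with properness of $f$ forces $f^{-1}(\overline{B_{2\rho}})$ to be compact, so $\gamma_\rho$ has compact support and the integration by parts below is legitimate. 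Multiplying $\tfrac12\Delta|\H|^2\ge\varepsilon|\nabla\H|^2$ by $\gamma_\rho^2$, integrating and integrating by parts yields
\[
\varepsilon\int_M \gamma_\rho^2|\nabla\H|^2\,d\mu \le -2\int_M \gamma_\rho\,\nabla\gamma_\rho\cdot\IP{\nabla\H}{\H}\,d\mu.
\]

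The decisive step is a Young absorption: estimating the right-hand side by $\delta\int_M\gamma_\rho^2|\nabla\H|^2\,d\mu + \delta^{-1}\int_M|\nabla\gamma_\rho|^2|\H|^2\,d\mu$ and choosing $\delta<\varepsilon$ moves the gradient term to the left, leaving
\[
(\varepsilon-\delta)\int_M \gamma_\rho^2|\nabla\H|^2\,d\mu \le \frac{C^2}{\delta\,\rho^2}\int_{f^{-1}(B_{2\rho})}|\H|^2\,d\mu.
\]
This is exactly where $\varepsilon>0$ is indispensable, since at $\varepsilon=0$ there is no room to absorb. Finally I would invoke \eqref{EQsmallgrowth}: it provides a sequence $\sigma_k\to\infty$ with $\sigma_k^{-2}\int_{f^{-1}(B_{\sigma_k})}|\H|^2\,d\mu\to0$, so choosing $\rho_k=\sigma_k/2$ sends the right-hand side to $0$ while the left-hand side increases, by monotone convergence, to $(\varepsilon-\delta)\int_M|\nabla\H|^2\,d\mu$. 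Hence $\int_M|\nabla\H|^2\,d\mu=0$ and $\nabla\H\equiv0$.

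I expect the only real obstacles to be bookkeeping: pinning down the correct connection so that the Kato identity holds verbatim, and justifying compact support of $\gamma_\rho$ --- which is precisely what properness of $f$ and completeness of $N$ provide. Once $\varepsilon>0$ is used in the absorption, the analytic content reduces to the one-line limit argument above.
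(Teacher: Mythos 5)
Your proposal is correct and follows essentially the same route as the paper: the paper likewise tests with the square of a cutoff pulled back from ambient geodesic balls, integrates $\nabla^*\big(\IP{\H}{\nabla\H}\eta^2\big)$ by parts (which is just your Kato-type identity in integrated form), absorbs the cross term using $\varepsilon>0$, and sends $\rho\to\infty$ along the sequence provided by \eqref{EQsmallgrowth}. The only differences are cosmetic: you derive the pointwise inequality $\tfrac12\Delta|\H|^2\ge\varepsilon|\nabla\H|^2$ before testing, and you keep the Young parameter $\delta$ free where the paper fixes it, yielding its constant $c/(\varepsilon^2\rho^2)$.
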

\begin{proof}
Suppose $\tilde{\eta}:N\rightarrow\R$ is a smooth cutoff function on a geodesic ball $B_\rho$
(centred anywhere) in $N$.
Clearly we can choose $\tilde{\eta}\in C_c^1(N)$ such that $\tilde{\eta}(q) = 1$ for $q\in B_\rho$,
$\tilde{\eta}(q) = 0$ for $q\not\in B_{2\rho}$, $\tilde{\eta}(q) \in [0,1]$ for all $q$, and
$|D\tilde{\eta}| \le \frac{c}{\rho}$ for some $c<\infty$.
Define $\eta(p) = (\tilde{\eta}\circ f)(p)$ for $p\in M$.
 
Let us use $\nabla^*$ to denote the divergence operator on $M$ with respect to $\nabla$.
Integrating $\nabla^*\Big(\IP{\H}{\nabla \H}\eta^2\Big)$ and using the divergence theorem we have
\[
\int_M \IP{\H}{\Delta \H}\eta^2\,d\mu
 + \int_M |\nabla \H|^2\eta^2\,d\mu
 + s\int_M \IP{\IP{\nabla \H}{\H} }{\nabla\eta}\eta\,d\mu
= 0\,,
\]
which implies
\begin{align*}
(\varepsilon-1)
\int_M |\nabla \H|^2\eta^2d\mu
  &\le
     \int_M \IP{\Delta \H}{\H}\eta^2\,d\mu
\\
  &=
    - \int_M |\nabla \H|^2\eta^2d\mu
    - 2\int_M \IP{\nabla \H}{\nabla\eta\,\H}\eta\,d\mu
\,,
\end{align*}
so
\begin{align*}
\int_{f^{-1}(B_\rho)} |\nabla \H|^2d\mu
 &\le
     \frac{c}{\varepsilon^2\rho^2}
     \int_{f^{-1}(B_{2\rho})} |\H|^2d\mu
\,.
\end{align*}
Recalling the assumption \eqref{EQsmallgrowth} and that $N$ is complete, the claim follows by taking
$\rho\rightarrow\infty$.
\end{proof}

It is important to note that the statement of the previous lemma is stronger than $\H$ being
parallel in the normal bundle; indeed, it is enough to guarantee that $f$ is minimal.

\begin{lem}
\label{LMh0}
Suppose the mean curvature $\H$ of $f:M\rightarrow N$ satisfies $\nabla\H \equiv 0$.  Then $f$ is
minimal.
\end{lem}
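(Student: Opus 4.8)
The plan is to exploit the fact that, for a normal vector field, the vanishing of its \emph{full} ambient covariant derivative forces a pointwise algebraic identity that immediately yields $|\H|^2 = 0$. The crucial first point is to read the hypothesis $\nabla\H \equiv 0$ correctly. In the present setting $\nabla$ acting on $\H$ is the pullback of the ambient connection $D$ along $f$, so $\nabla\H \equiv 0$ means $D_i\H = 0$ for every coordinate direction, i.e. the entire ambient derivative of $\H$ vanishes, including its tangential part, and not merely its component in the induced normal-bundle connection. This distinction is exactly the content of the remark preceding the lemma, and it is the only genuinely delicate point: parallelism in the normal bundle alone is far too weak (round spheres satisfy it yet are non-minimal), whereas the strong parallelism assumed here will be decisive.

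First I would record that $\H$ is a section of the normal bundle, so $\IP{\H}{\partial_j f} = 0$ for each $j$. Differentiating this orthogonality relation with $D_i$, using compatibility of the ambient metric with $D$, and then invoking the hypothesis $D_i\H = 0$, I get
\[
0 = \partial_i\IP{\H}{\partial_j f} = \IP{D_i\H}{\partial_j f} + \IP{\H}{D_i\partial_j f} = \IP{\H}{D_i\partial_j f}.
\]
Because $\H$ is normal, only the normal part of $D_i\partial_j f$ survives the pairing, and that normal part is by definition $A_{ij} = \big(D_i\partial_j f\big)^\perp$. Hence $\IP{\H}{A_{ij}} = 0$ for all $i,j$; geometrically this says the shape operator (Weingarten map) in the direction $\H$ vanishes identically.

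Finally I would contract with the inverse metric $g^{ij}$ and use the definition $\H = g^{ij}A_{ij}$:
\[
0 = g^{ij}\IP{\H}{A_{ij}} = \IP{\H}{g^{ij}A_{ij}} = \IP{\H}{\H} = |\H|^2,
\]
so $\H \equiv 0$ and $f$ is minimal. I do not expect any analytic obstacle here, since the argument is entirely pointwise linear algebra with no integration or cutoff machinery required; the whole difficulty is conceptual and is already resolved in the first paragraph, namely ensuring that $\nabla\H\equiv0$ is interpreted as the full parallelism $D\H\equiv0$. The punchline is simply that the tangential part of $D\H$ is precisely the $\H$-directed second fundamental form, whose trace over $g$ is exactly $|\H|^2$.
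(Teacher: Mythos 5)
Your proof is correct and takes essentially the same approach as the paper's: both rest on reading $\nabla\H$ as the full pullback of the ambient connection, so that the tangential component of the derivative of $\H$ is the $\H$-contracted second fundamental form, and then tracing over $g$ to conclude $|\H|^2 = 0$. The only cosmetic difference is that you derive the Weingarten-type identity $\IP{\H}{D_i\partial_j f} = \IP{\H}{A_{ij}} = 0$ by differentiating the orthogonality relation $\IP{\H}{\partial_j f} = 0$, whereas the paper invokes it directly in the form $0 = \IP{\nabla_{\tau_i}\H}{\tau_j} = H_\alpha A^\alpha(\tau_i,\tau_j)$ before tracing.
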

\begin{proof}
Let $p\in M$ and choose an orthonormal basis $\{\tau_i\}_{i=1}^m$ for $T_pM$.
We also choose an orthonormal basis $\{\nu^\alpha\}_{\alpha=1}^n$ of $(T_pM)^\perp$.
For any $i$, $j$, we have at $p$ that
\[
0 = \IP{\nabla_{\tau_i}\H}{\tau_j}
  = H_\alpha A^\alpha(\tau_i,\tau_j)\,.
\]
Tracing over the metric $g$, we conclude
\[
0 = \sum_{\alpha=1}^n(H_\alpha)^2
\]
and we are done.
\end{proof}

Combining Lemmas \ref{LMhconst} and \ref{LMh0} concludes the proof of Theorem \ref{TMgeneral}.

\section*{Acknowledgements}

This work was completed and announced during the 2011 SIAM PDE conference at Mission Valley.  The
author is grateful for the hospitality and atmosphere provided by SIAM at this event.
The author would like to thank in particular the participants of the mini-symposium ``Topics in
Higher Order Geometric Partial Differential Equations'' for lively and enlightening mathematical
discussions related to this work.

The author is supported by an Alexander von Humboldt research fellowship at the Otto-von-Guericke
Universit\"at Magdeburg.

\bibliographystyle{plain}
\bibliography{WheelerEpsSuperBihSubs}

\end{document}